\documentclass[twocolumn,conference,10pt]{IEEEtran}
\usepackage{xcolor,soul,amssymb,amsthm,amsmath,dsfont} 
\usepackage{array}
\usepackage{url}
\usepackage{amsmath}
\usepackage{amssymb}
\usepackage[mathscr]{euscript}
\usepackage{mathtools}
\usepackage[utf8]{inputenc}
\usepackage[english]{babel}
\usepackage{amsthm}
\usepackage{bbm}
\usepackage{subcaption}
\usepackage{algorithm,algorithmic}
\usepackage{array}
\usepackage[pdftex]{graphicx}
\graphicspath{{figures/}{./}}
\usepackage{tikz,pgfplots}

\pgfplotsset{compat=1.16} 
\tikzstyle{vertex} = [fill,shape=circle,node distance=30pt]
\tikzstyle{edge} = [fill,opacity=.6,fill opacity=.5,line cap=round, line join=round, line width=10pt]
\tikzstyle{elabel} =  [fill,shape=circle,node distance=30pt,fill opacity=.9]

\pgfdeclarelayer{background}
\pgfsetlayers{background,main}

\ifCLASSINFOpdf
\else
\fi

\newcommand{\black}{\color{black}}

\newtheorem{theorem}{Theorem}

\newtheorem{proposition}[theorem]{Proposition}
\newtheorem{problem}{Problem}

\newtheorem{assumption}{Assumption}

\newcommand{\calC}{\mathcal{C}}

\newcommand{\calL}{\mathcal{L}}
\newcommand{\bfP}{\mathbf{P}}
\newcommand{\bfQ}{\mathbf{Q}}

\DeclareMathOperator*{\argmax}{arg\,max}
\DeclareMathOperator*{\argmin}{arg\,min}

\begin{document}
\bstctlcite{}
\title{Network Learning with Directional Sign Patterns}
\author{Anqi Dong, Can Chen, Tryphon T.~Georgiou\\
\thanks{Anqi Dong and Tryphon T.~Georgiou are with the Department of Mechanical and Aerospace Engineering, University of California, Irvine, CA 92697, USA. \texttt{\{anqid2,tryphon\}@uci.edu}}
\thanks{Can Chen is with the School of Data Science and Society and the Department of Mathematics, University of North Carolina at Chapel Hill, Chapel Hill, NC 27599, USA. \texttt{canc@unc.edu}}}
\markboth{Dong \MakeLowercase{\textit{et al.}}: Network Learning with Directional Sign Patterns
}{Dong \MakeLowercase{\textit{et al.}}: Network Learning with Directional Sign Patterns}
\maketitle

\begin{abstract}
Complex systems can be effectively modeled via graphs that encode networked interactions, where relations between entities or nodes are often quantified by signed edge weights, e.g., promotion/inhibition in gene regulatory networks, or encoding political of friendship differences in social networks. However, it is often the case that only an aggregate consequence of such edge weights that characterize relations may be directly observable, as in protein expression of in gene regulatory networks. Thus, learning edge weights poses a significant challenge that is further exacerbated for intricate and large-scale networks. 
In this article, we address a model problem to determine the strength of sign-indefinite relations that explain marginal distributions that constitute our data. To this end, we develop a paradigm akin to that of the Schr\"odinger bridge problem and an efficient Sinkhorn type algorithm (more properly, Schr\"odinger-Fortet-Sinkhorn algorithm) that allows fast convergence to parameters that minimize a relative entropy/likelihood criterion between the sought signed adjacency matrix and a prior. The formalism that we present represents a novel generalization of the earlier Schr\"odinger formalism in that marginal computations may incorporate weights that model directionality in underlying relations, and further, that it can be extended to high-order networks -- the Schr\"odinger-Fortet-Sinkhorn algorithm that we derive is applicable all the same and allows geometric convergence to a sought sign-indefinite adjacency matrix or tensor, for high-order networks.
We demonstrate our framework with synthetic and real-world examples.
\end{abstract}

\begin{IEEEkeywords}
Signed networks, Sinkhorn algorithm, network learning, higher-order networks,  Schr\"odinger bridge problem.   
\end{IEEEkeywords}

\IEEEpeerreviewmaketitle

\section{Introduction}
Complex interactions between multiple subsystems can often be encoded in a networked structure that models respective relations \cite{mesbahi2010graph,newman2018networks}. These relations may be binary as in ordinary graphs with vertices and edges, trinary and so on, as in higher-order relations. The strength of respective interactions may be assigned as a weight in corresponding edges, or their higher-order analogues. Yet, contrary to typical accounts in the vast literature on the subject, interactions may be sign-indefinite, signifying e.g., promotion/inhibition or friend/unfriend in biological and social networks, respectively. Moreover, perceived aggregate effects of such relations may have an added directional bias. In many such examples, acquiring nodal information is relatively straightforward, while directly and precisely learning edge weights presents a significant challenge. Numerous computational methods have been proposed to learn and quantify edge weights in networks, including cross-correlation \cite{garofalo2009evaluation}, Granger causality \cite{basu2015network}, mutual information \cite{villaverde2014mider}, system identification \cite{chiuso2019system}, and graph transformer neural networks \cite{yun2019graph}. However, most of these methods require adequate time-series data on node states, which might not be easily accessible in general.

In our recent work \cite{dong2023negative}, we presented a framework for identifying sign-distinguishable edge weights by utilizing prior knowledge of the sign-indefinite structure, encoded in a sign-indefinite ``prior'' adjacency matrix, and nodal statistics that constitute data. The formalism involves minimizing a suitable relative entropy functional between sign-indefinite measures, which can be solved using a suitable generalization of the well-known Sinkhorn algorithm (more properly, Schr\"odinger-Fortet-Sinkhorn algorithm, since it was explicit in the work of Fortet \cite{essid2019traversing} on the Schr\"odinger bridge problem). The generalization consists in modifying the iteration steps by scaling with values computed as the positive roots of a quadratic polynomial -- the classical Sinkhorn ``diagonal-scaling'' iteration applies when the adjacency matrix is sign-definite (with entries $\geq 0$) \cite{chen2015optimal, georgiou2015positive}. The proposed method was applied to determine the promotion and inhibition interactions in gene regulatory networks \cite{dong2023negative}.

Whereas determining the sign (promotion/inhibition) in multi-subsystem/multi-species relations may be tangible, determining the precise strength may be challenging as it is not typically accessible in complex and large-scale networks. For instance, in gene regulatory networks, protein expression reveals marginal information on nodes, and statistics on increased/decreased expression provide a sign for the promotion/inhibitive coupling between nodes. However, the strength of such pair-wise or, more generally, multi-gene interactions cannot be directly observed due to stupendously complicated effects across the network. Yet it is of at most importance in inferring potential fitness advantages on organisms by their combined effect \cite{tannenbaum2004solution}. Then, in ecological networks, quantifying interspecies interactions can also be challenging due to the intricate complexity and variability of the overall activity across the network, including environmental factors, habitat structures, and species behaviors \cite{bastolla2009architecture, stouffer2011compartmentalization}. Such scenarios motivate our problem, where partial information at the level of nodes can be assumed reliable, whereas the strength (but not the sign) of respective interactions is less so. Thereby, we consider directional marginal-wise sign templates that may encode the combined effect of interactions at the level of nodes, and assume nodal data as given, from which we seek to identify strength levels of pair-wise or multi-node interactions. 

Evidently, in real-world systems that may be impacted by higher-order interactions, describing these solely using networks may lead to a loss of higher-order information \cite{wolf2016advantages}. A higher-order network (or a hypergraph) is a generalization of a network, in which its hyperedges can connect any number of nodes \cite{berge1984hypergraphs}. Examples of higher-order networks encompass various domains, including email communication networks, metabolic networks, and protein-protein interaction networks \cite{chen2023teasing,chen2023survey,newman2018networks}. Computational methods and the influx of concepts such as entropy \cite{chen2020tensor}, controllability \cite{chen2021controllability}, and various similarity measures \cite{surana2022hypergraph} are rapidly entering the toolbox for dealing with higher-order networks. The contribution in the present paper can be viewed from this angle, formulating a basic problem of inference of multi-node interactions based on observed partial information and priors.

Thus, in this article, we formulate the problem and propose a framework aimed at tackling the inverse problem of estimating edge weights in a network by minimizing the Kullback–Leibler divergence between a prior and a posterior concerning directional marginal-wise sign patterns. Similar to our previous work \cite{dong2023negative}, this problem can be solved using a Sinkhorn-like algorithm. This algorithm amounts to implementing coordinate ascent to maximize a concave functional, leading to linear convergence rate. The algorithm represents a significant generalization of the standard solver of the entropic regularized optimal transport problem and the Schr\"odinger bridges problem, which has attracted increasing attention among the fields of control~\cite{chen2016relation,chen2021optimal}, machine learning~\cite{chen2021optimal}, and image processing~\cite{peyre2019computational}. Furthermore, is applies to higher-order networks, where we determine hyperedge weights based on directional marginal-wise sign-indefinite adjacency tensors (which are generalizations of adjacency matrices).

The organization of the remaining article is as follows. In Section \ref{sec:prelim}, we provide necessary background knowledge of signed networks and the Sinkhorn algorithm. Section \ref{sec:sinkhorn} presents the generalized Sinkhorn algorithm along with established theoretical results for our proposed problem. Section \ref{sec:general} discusses the generalization to general higher-order networks with the proposed method. Section \ref{sec:num} includes three numerical examples to validate the algorithm. Finally, we conclude with future directions in Section \ref{sec:conclude}.

\section{Preliminaries}\label{sec:prelim}
\subsection{Signed Networks}
Signed networks refer to network structures where the relationships between nodes are sign-indefinite ($\textbf{A}_{ij}\gtreqqless 0$) \cite{zaslavsky1982signed}. These networks capture not only affinity between nodes but also the nature of respective interactions, whether these represent cooperation, friendship, antagonism, rivalry, or other types of expression. Signed networks are prevalent across diverse domains, spanning sociology \cite{yuan2017edge}, ecology \cite{dale2021quantitative}, and signal processing \cite{dittrich2020signal}. Mathematically, a signed network can be represented by a sign-indefinite adjacency matrix, where its entries take values of $-1$, $0$, and $1$, or by a suitably weighted counterpart. 

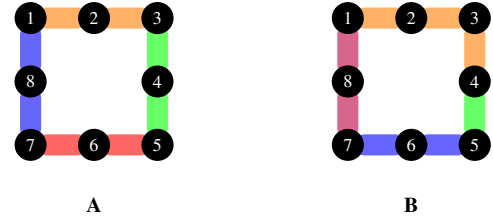
\begin{figure}[t]
\centering
\scalebox{0.8}{
\begin{tikzpicture}[scale=1]
\node[vertex,text=white,scale=0.85] (v1) {1};
\node[vertex,right of=v1,text=white,scale=0.85, node distance=30pt] (v2) {2};
\node[vertex,right of=v2,text=white,scale=0.85, node distance=30pt] (v3) {3};
\node[vertex,below of=v3,text=white,scale=0.85, node distance=30pt] (v4) {4};
\node[vertex,below of=v4,text=white,scale=0.85, node distance=30pt] (v5) {5};
\node[vertex,left of=v5,text=white,scale=0.85, node distance=30pt] (v6) {6};
\node[vertex,left of=v6,text=white,scale=0.85, node distance=30pt] (v7) {7};
\node[vertex,above of=v7,text=white,scale=0.85, node distance=30pt] (v8) {8};
\node[below of=v6,yshift=0cm]  (A) {\textbf{A}};

\node[vertex,right of=v1,text=white,scale=0.85, node distance=150pt] (v11) {1};
\node[vertex,right of=v11,text=white,scale=0.85, node distance=30pt] (v21) {2};
\node[vertex,right of=v21,text=white,scale=0.85, node distance=30pt] (v31) {3};
\node[vertex,below of=v31,text=white,scale=0.85, node distance=30pt] (v41) {4};
\node[vertex,below of=v41,text=white,scale=0.85, node distance=30pt] (v51) {5};
\node[vertex,left of=v51,text=white,scale=0.85, node distance=30pt] (v61) {6};
\node[vertex,left of=v61,text=white,scale=0.85, node distance=30pt] (v71) {7};
\node[vertex,above of=v71,text=white,scale=0.85, node distance=30pt] (v81) {8};
\node[below of=v61,yshift=0cm]  (B) {\textbf{B}};

\begin{pgfonlayer}{background}
\begin{scope}[transparency group,opacity=1]
\draw[edge,color=orange] (v1) -- (v2) -- (v3);
\draw[edge,color=green] (v3) -- (v4) -- (v5);
\draw[edge,color=red] (v5) -- (v6) -- (v7);
\draw[edge,color=blue] (v7) -- (v8) -- (v1);

\draw[edge,color=orange] (v11) -- (v21)-- (v31) -- (v41);
\draw[edge,color=green] (v41) -- (v51);
\draw[edge,color=blue] (v51) -- (v61) -- (v71);
\draw[edge,color=purple] (v11) -- (v81) -- (v71);
\end{scope}
\end{pgfonlayer}
\end{tikzpicture}}
\caption{Every hyperedge in the example is colored by one color. {\bf (A)} 3-uniform higher-order network with hyperedges $e_1=\{1,2,3\}$, $e_2=\{3,4,5\}$, $e_3=\{5,6,7\}$ and $e_4=\{8,7,1\}$. {\bf (B)} Non-uniform higher-order network with hyperedges $e_1=\{1,2,3,4\}$, $e_2=\{4,5\}$, $e_3 = \{5,6,7\}$ and $e_3=\{1,8,7\}$. }
\label{fig:hypergraph}
\end{figure}

Signed higher-order networks generalize signed networks by representing relationships among multiple entities or nodes with positive or negative signs. 
When all hyperedges link the exact same $k$ number of nodes, the higher-order network is termed $k$-uniform (Figure \ref{fig:hypergraph}). Every $k$-uniform higher-order network with $n$ nodes can be represented by a $k$th-order $n$-dimensional adjacency tensor (i.e., a multidimensional array generalized from matrices and vectors) \cite{chen2020tensor, chen2021controllability}. Thus, similar to signed networks, a signed $k$-uniform higher-order network can be characterized by a sign-indefinite adjacency tensor $\mathcal{A}\in\mathbb{R}^{n\times n\times \stackrel{k}{\cdots}\times n}$ defined as
\begin{equation}\label{eq:tensor-adj}
    \mathcal{A}_{i_1i_2\cdots i_k} = 
    \begin{cases}
       \phantom{-}1  &\text{ if } e \text{ forms a positive interaction}\\
       -1  &\text{ if } e \text{ forms a negative interaction}\\
       \phantom{-}0  &\text{ otherwise}
    \end{cases},
\end{equation}
where $e$ denotes the hyperedge containing node $i_1, i_2, \dots, i_k$.

\subsection{Schr\"odinger Bridges Problem}
The Schrödinger Bridges Problem (SBP) originated as a problem in large deviations theory, as the problem to minimize the Kullback-Leibler divergence, a measure of dissimilarity between probability distributions \cite{chen2016relation,chen2021stochastic}, between prior and posterior, so that the latter agrees with estimated statistics. 
Thus, if $\textbf{Q}\in\mathbb{R}_{\geq 0}^{n\times n}$ with non-negative entries represents a ``prior'' joint probability between two random variables, and if the corresponding to positive probability vectors $\textbf{p}\in\mathbb{R}^n_{> 0}$ and $\textbf{q}\in\mathbb{R}^n_{> 0}$ are not consistent with $\textbf{Q}$, then SBP seeks to adjust the values in $\textbf{Q}$ so as to derive a ``posterior'' $\textbf{P}\in\mathbb{R}^{n\times n}_{\geq 0}$ that is consistent with the specified marginals. The posterior matrix $\textbf{P} \in \mathscr{C}$ is defined as the set of admissible elements from a feasible set $\mathscr{C} = \mathscr{C}_0 \cap \mathscr{C}_1 \cap \mathscr{C}_2$, where $\mathscr{C}_0 := \{\textbf{P}~|~ \textbf{P}_{ij} \geq 0\}$, $\mathscr{C}_1 := \{\textbf{P}~|~ \sum_{j = 1}^n \textbf{P} = \textbf{p}\}$, and $\mathscr{C}_2 := \{\textbf{P}~|~ \sum_{i = 1}^n \textbf{P} = \textbf{q}\}$.

\begin{algorithm}[htb!]
\caption{Sinkhorn algorithm}\label{alg:Sinkhorn}
\begin{algorithmic}[1]
\STATE Initialize $\boldsymbol{\nu}\in\mathbb R^n$, e.g., setting $\boldsymbol{\nu}=\textbf{0}$.\\[0.045in]
\STATE 
For $j=1,2,\dots,n$, determine 
\begin{equation*}
\mu_j= \log(p_j/\sum_{i=1}^n \textbf{Q}_{ij}\exp(-\nu_i)). 
\end{equation*}
\STATE 
For $i=1,2,\dots, n$, determine 
\begin{equation*}
\nu_i= \log(q_i/\sum_{j=1}^n \textbf{Q}_{ij}\exp(-\mu_j)).
\end{equation*}
\STATE Repeat steps 2 and 3 until convergence.
\end{algorithmic}
\end{algorithm}

The standard SBP seeks $\textbf{P}$ as the minimizer of the Kullback-Leibler divergence to the prior $\textbf{Q}$, i.e., as the solution to
\begin{align}
   \textbf{P}^\star = \argmin_{\textbf{P}\in \mathscr{C} } \mbox{KL}(\textbf{P}|\textbf{Q})= \argmin_{\textbf{P}\in \mathscr{C} }  \sum_{i=1}^n\sum_{j=1}^n \textbf{P}_{ij}\log{(\frac{\textbf{P}_{ij}}{\textbf{Q}_{ij}})}. 
\end{align}
Throughout we follow the convention that $\log(0/0) = 0$. The objective function is strictly convex, and once the feasible set $\mathscr{C}$ is non-empty, an optimizer $\textbf{P}^*$ always exists. Additionally, a closed-form solution can be derived from the first-order optimality condition by employing the Lagrangian, i.e., 
\begin{align*}
\calL(\textbf{P},\boldsymbol{\mu},\boldsymbol{\nu}) :=
\! \! \! &\sum_{ij|\textbf{Q}_{ij}\neq 0}^n \!\!\! \textbf{P}_{ij} \log (\frac{\textbf{P}_{ij}}{\textbf{Q}_{ij}})+ \sum_{j=1}^n \mu_{j} \big(\sum_{i=1}^n \textbf{P}_{ij}-q_{j}\big)\\ 
&+  \sum_{i=1}^n \nu_{i} \big(\sum_{j=1}^n \textbf{P}_{ij} - p_{i} \big),
\end{align*}
where $\mu_j$ is the $j$th entry of $\boldsymbol{\mu}$ (similarly for $q_j$, $\nu_i$, and $p_i$). Therefore, 
\begin{align}\label{eq:Pstar}
    \textbf{P}_{ij}^{*} =  \textbf{Q}_{ij}  \exp{(-\nu_{i}-\mu_{j})}
\end{align}
with $\boldsymbol{\mu}$ and $\boldsymbol{\nu}$ are the dual variables for the marginal constraints
\begin{align}\label{eq:constraint1}
\sum_{j=1}^n \textbf{P}_{ij}^\star = \textbf{p} \text{ and } \ \sum_{i=1}^n \textbf{P}_{ij}^\star = \textbf{q}.
\end{align}
The solution in \eqref{eq:Pstar} has to satisfy the marginal constraints in $\mathscr{C}_{1}$ and $\mathscr{C}_2$, and the optimizer can be thus obtained by the Sinkhorn scheme that iteratively updates the two dual variables (Algorithm \ref{alg:Sinkhorn}). The linear convergence rate of the Sinkhorn algorithm can be established using the Hilbert metric, and interested readers are referred to \cite{peyre2019computational} for further details.

The SBP can be generalized to the multidimensional setting, taking the form
\begin{equation}\label{eq:tensor_entropy}
\mathcal{P}^\star = \argmin_{\mathcal{P}\in \mathscr{C} }  \sum_{i_1 i_2 \cdots i_k} \mathcal{P}_{i_1i_2\cdots i_{k}}\left( \log{\Big(\frac{\mathcal{P}_{i_1i_2\cdots i_{k}}}{\mathcal{Q}_{i_1i_2\cdots i_{k}}}\Big)}\right),
\end{equation}
where $\mathcal{Q}\in\mathbb{R}^{n\times n\times \stackrel{k}{\cdots}\times n}_{\geq 0}$ represents the given prior and $\mathcal{P}\in\mathbb{R}^{n\times n\times \stackrel{k}{\cdots}\times n}_{\geq 0}$ denotes the posterior. The posterior $\mathcal{P}$ satisfies the marginal constraint $\mathcal{P} \in \mathscr{C}_0 \cap \mathscr{C}_{1} \cap \cdots \cap \mathscr{C}_{k}$ where each $\mathscr{C}_{\ell}$ is defined as 
$
\calC_{\ell} = \{\mathcal{P}~|~\sum_{i_1\cdots i_k/i_\ell} \mathcal{P} = \textbf{p}^{(\ell)}\} 
$
for every marginal vector $\textbf{p}^{(l)}\in\mathbb{R}^n_{>0}$. Similar to the standard Sinkhorn iteration in Algorithm \ref{alg:Sinkhorn}, a closed-form solution of the optimizer can be derived, obtained by a gradient ascent method with, similarly, a linear convergence rate \cite{luo1992convergence}.

\section{Sinkhorn with Directional Sign Templates}\label{sec:sinkhorn}
The underlying principle in our previous work~\cite{dong2023data,dong2023negative} assumes a predetermined prior, with a given sign template applied across all directions of the marginal constraints. However, such an assumption may not always be practical for real-world applications. In complex systems, precisely capturing relationships can be challenging due to uncertainties or the inability to directly measure certain edge signs. This scenario would lead to the creation of multiple distinct signed networks, each representing alternative interpretations of the system's dynamics. For instance, in a social network where friendships and animosities exist, the absence of direct observations for several relationships may lead to the construction of multiple signed social networks with differing assumptions about uncertain relationships. Since sign patterns may be subjective and their precise determination may be impossible under certain circumstances, a framework that allows for directional sign templates as an initial guess is necessary and critical.

We introduce a non-trivial generalization of the Sinkhorn algorithm, namely Sinkhorn with directional sign templates. We first start with the classical two-dimensional case by considering a prior matrix $\textbf{Q}\in\mathbb{R}^{n\times n}_{\geq 0}$ and two specified positive marginal vectors $\textbf{p}\in\mathbb{R}^n_{>0}$ and $\textbf{q}\in\mathbb{R}^n_{>0}$. In contrast to the previous work, the problem now admits two directional marginal-wise sign patterns. The constraints for the posterior matrix $\textbf{P}\in\mathbb{R}^{n\times n}_{\geq 0}$ in \eqref{eq:constraint1} are thus generalized to 
\begin{equation}\label{eq:constraint2}
    \begin{aligned}
    \mathscr{C}^\prime_1 &:= \{\bfP~|~ \sum_{j=1}^n [\textbf{X} * \bfP]_{ij} = p_j\},\\
    \mathscr{C}^\prime_2 &:= \{\bfP~|~ \sum_{i=1}^n [\textbf{Y} * \bfP]_{ij} = q_i\},
\end{aligned}
\end{equation}
where $\textbf{X}\in\mathbb{R}^{n\times n}$ and $\textbf{Y}\in\mathbb{R}^{n\times n}$ are two distinct sign-indefinite adjacency matrices of the network, and $*$ denotes the element-wise multiplication.

\begin{problem}\label{prob:2-dim}
The SBP seeks the most likely posterior $\textbf{P}\in\mathbb{R}^{n\times n}_{\geq 0}$ with respect to a given prior $\textbf{Q}\in\mathbb{R}^{n\times n}_{\geq 0}$, achieved by minimizing the relative entropy
\begin{align}\label{prob:1}
    \textbf{P}^\star = \argmin_{\textbf{P}} \sum_{i=1}^n\sum_{j=1}^n \textbf{P}_{ij} \left( \log \left(\frac{\textbf{P}_{ij}}{\textbf{Q}_{ij}}\right) -1 \right)
\end{align}
with the convention $\log{(0/0)} = 0$. We intentionally add $-1$ at the end for simplicity in later computations. The posterior $\textbf{P}$ is also required to meet the marginal constraints
\begin{equation}\label{eq:constraint3}
    \sum_{i=1}^n \textbf{X}_{ij}\textbf{P}_{ij} = p_{j}\text{  and }  \sum_{j=1}^n \textbf{Y}_{ij}\textbf{P}_{ij} = q_{i}.
\end{equation}
\end{problem}

Our problem considers two distinct sign patterns, indicated by $\textbf{X}$ and $\textbf{Y}$, for different marginals while maintaining a convex objective. 
First, we demonstrate that the optimizer has a closed-form solution under the feasibility assumption.

\begin{assumption}[Existence]\label{assum:1}
The solution to Problem \ref{prob:2-dim} exists. In other words, the set of possible solutions is defined as
\begin{align*}
      \{\textbf{P}~|~\textbf{P}\in \mathscr{C}^\prime_1 \cap \mathscr{C}^\prime_2 \} \cap \{\textbf{P}~|~KL(\textbf{P}~|~\textbf{Q}) < +\infty\}
\end{align*}
is non-empty.
\end{assumption}

Note that a simple feasibility test may not exist for the problem. It is well-known that certain moment problems lack a straightforward feasibility test, as discussed in~\cite{georgiou2006relative}.

\begin{proposition}[Closed-form solution]
The optimizer \textbf{P} has a closed-form solution computed as 
\begin{align}
    \textbf{P}^\star_{ij} =
\begin{cases}
    \textbf{Q}_{ij} \exp{(-\mu_j \textbf{X}_{ij} - \nu_i \textbf{Y}_{ij})} &\text{ for } \textbf{X}_{ij}\neq 0,~\textbf{Y}_{ij} \neq 0\\
    0 &\mbox{ for } \textbf{X}_{ij} = 0, ~\textbf{Y}_{ij}=0
\end{cases},
\end{align}
where $\boldsymbol{\mu}\in\mathbb R^{n}$ and $\boldsymbol{\nu}\in\mathbb R^{n}$ are the Lagrangian multipliers of the constraints in \eqref{eq:constraint2}.
\end{proposition}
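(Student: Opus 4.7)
The plan is to exploit strict convexity of the objective and linearity of the marginal constraints in Problem \ref{prob:2-dim}, so that the KKT stationarity condition is both necessary and sufficient and yields the closed form directly. I would form the Lagrangian
\begin{align*}
\mathcal{L}(\textbf{P},\boldsymbol{\mu},\boldsymbol{\nu})
&= \sum_{i,j}\textbf{P}_{ij}\!\left(\log\tfrac{\textbf{P}_{ij}}{\textbf{Q}_{ij}}-1\right)
+ \sum_{j}\mu_j\!\Big(\sum_i \textbf{X}_{ij}\textbf{P}_{ij}-p_j\Big)\\
&\quad + \sum_i \nu_i\!\Big(\sum_j \textbf{Y}_{ij}\textbf{P}_{ij}-q_i\Big),
\end{align*}
using the marginal constraints from \eqref{eq:constraint3}. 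Assumption~\ref{assum:1} supplies a feasible point with finite KL value, so strong duality holds and a unique minimizer exists in the open positive orthant, where the objective is smooth.

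On entries with $\textbf{Q}_{ij}>0$, I would next compute $\partial\mathcal{L}/\partial\textbf{P}_{ij}$ and set it to zero. The purpose of the extra $-1$ inserted in the objective becomes apparent here: the derivative of $\textbf{P}_{ij}(\log(\textbf{P}_{ij}/\textbf{Q}_{ij})-1)$ simplifies to $\log(\textbf{P}_{ij}/\textbf{Q}_{ij})$, producing the clean stationarity condition
\[
\log(\textbf{P}^\star_{ij}/\textbf{Q}_{ij}) + \mu_j\textbf{X}_{ij} + \nu_i\textbf{Y}_{ij} = 0,
\]
which rearranges to $\textbf{P}^\star_{ij}=\textbf{Q}_{ij}\exp(-\mu_j\textbf{X}_{ij}-\nu_i\textbf{Y}_{ij})$, the first branch of the claim. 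For entries with $\textbf{X}_{ij}=\textbf{Y}_{ij}=0$, the variable $\textbf{P}_{ij}$ appears in none of the marginal constraints; the declared value $\textbf{P}^\star_{ij}=0$ corresponds to the implicit modeling convention that an entry missing from both sign templates carries no prior mass ($\textbf{Q}_{ij}=0$), whereby finiteness of $KL(\textbf{P}|\textbf{Q})$ together with $\log(0/0)=0$ forces $\textbf{P}^\star_{ij}=0$.

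The main subtle point is justifying that the nonnegativity constraints $\textbf{P}_{ij}\geq 0$ are automatically inactive, so that no additional multipliers appear in the stationarity condition. Since $\partial\mathcal{L}/\partial\textbf{P}_{ij}\to-\infty$ as $\textbf{P}_{ij}\to 0^+$ whenever $\textbf{Q}_{ij}>0$, the minimizer cannot lie on that boundary and the KKT multipliers associated with nonnegativity vanish. A secondary concern, worth flagging rather than overcoming, is that the sign-indefinite matrices $\textbf{X}$ and $\textbf{Y}$ make the marginal constraints less transparent than in the classical SBP (rows can have mixed signs so the constraint map is not monotone), but linearity in $\textbf{P}$ together with Assumption~\ref{assum:1} is still enough to guarantee the existence of multipliers $\boldsymbol{\mu},\boldsymbol{\nu}$; determining them explicitly is the task of the generalized Sinkhorn iteration developed in the sequel.
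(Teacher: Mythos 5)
Your proposal is correct and follows essentially the same route as the paper: form the Lagrangian with multipliers for the two signed marginal constraints, set $\partial\mathcal{L}/\partial\textbf{P}_{ij}=0$ (where the inserted $-1$ cancels the constant from differentiating $\textbf{P}_{ij}\log\textbf{P}_{ij}$), and read off $\textbf{P}^\star_{ij}=\textbf{Q}_{ij}\exp(-\mu_j\textbf{X}_{ij}-\nu_i\textbf{Y}_{ij})$. Your added remarks on why the nonnegativity constraints are inactive and on reconciling the case split on $\textbf{X}_{ij},\textbf{Y}_{ij}$ with the proof's split on $\textbf{Q}_{ij}$ are reasonable elaborations of points the paper leaves implicit.
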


\begin{proof}
We first rewrite the Lagrangian $\calL$ as
\begin{equation}\label{eq:Lagrangian}
\begin{split}
{\mathcal L}(\mathbf{\bfP},\boldsymbol{\mu},\boldsymbol{\nu})&:=
\! \! \! \sum_{ij|\bfQ_{ij}\neq 0} \!\!\! \bfP_{ij} \left(\log (\frac{\bfP_{ij}}{\bfQ_{ij}})-1\right)\\ 
&+ 
\sum_{j} \mu_j \big(\sum_{i=1}^n  \textbf{X}_{ij} \bfP_{ij}-p_j\big)\\&+  
\sum_{i} \nu_i \big(\sum_{j=1}^n \textbf{Y}_{ij} \bfP_{ij} - q_i \big).
\end{split}
\end{equation}
The first-order optimality condition $\partial \mathcal L/\partial \bfP_{ij}=0$ yields
\begin{align*}
   \log(\frac{\bfP_{ij}}{\bfQ_{ij}}) + \mu_j \textbf{X}_{ij} + \nu_i \textbf{Y}_{ij}=0,
\end{align*}
for $i,j$  with $\bfQ_{ij}>0$. Otherwise, $\bfP_{ij}=0$. 
Therefore, the optimizer must have the following functional dependence on the Lagrange multipliers, i.e., 
\begin{align}\label{eq:Pistar}
 \bfP^*_{ij} = \bfQ_{ij}\exp\big(-\mu_j \textbf{X}_{ij}-\nu_i\textbf{Y}_{ij}\big),
\end{align}
if $\bfQ_{ij} \neq 0$, and $\bfP^{*}_{ij} = 0$ otherwise.
\end{proof}

The optimal kernel can be rewritten in a more specific form 
\begin{align}\label{eq:P_opt}
    \bfP^*_{ij} = 
\begin{cases}
    \bfQ_{ij}\exp\big(- \mu_j - \nu_i\big),  &\mbox{if } \textbf{X}_{ij}>0,~\textbf{Y}_{ij}>0,\\
    \bfQ_{ij}\exp\big(- \mu_j + \nu_i\big),  &\mbox{if } \textbf{X}_{ij}>0,~\textbf{Y}_{ij}<0,\\
    \bfQ_{ij}\exp\big(+ \mu_j - \nu_i\big),  &\mbox{if } \textbf{X}_{ij}<0,~\textbf{Y}_{ij}>0,\\
    \bfQ_{ij}\exp\big(+ \mu_j + \nu_i\big),  &\mbox{if } \textbf{X}_{ij}<0,~\textbf{Y}_{ij}<0,\\
    0,                                  &\mbox{if } \textbf{X}_{ij}=0,~\textbf{Y}_{ij}=0.
\end{cases}
\end{align}
To write \eqref{eq:P_opt} in a more compact form, we define  indicator matrices $\bfQ^{++}$, $\bfQ^{+-}$, $\bfQ^{-+}$, and $\bfQ^{--}$ as 
\begin{align*}
    \bfQ^{++} &:= \{ \bfQ_{ij}~|~\textbf{X}_{ij}>0,~\textbf{Y}_{ij}>0\},\\
    \bfQ^{+-} &:= \{ \bfQ_{ij}~|~\textbf{X}_{ij}>0,~\textbf{Y}_{ij}<0\},\\
    \bfQ^{-+} &:= \{ \bfQ_{ij}~|~\textbf{X}_{ij}<0,~\textbf{Y}_{ij}>0\},\\
    \bfQ^{--} &:= \{ \bfQ_{ij}~|~\textbf{X}_{ij}<0,~\textbf{Y}_{ij}<0\}.
\end{align*}
It is straightforward to see that 
\begin{align*}
    \bfQ = \bfQ^{++}+\bfQ^{+-}+\bfQ^{-+}+\bfQ^{--}.
\end{align*}
We also define variables $\alpha_j = \exp(\mu_j)$ and $\beta_i = \exp(\nu_i)$.  The optimizer  \eqref{eq:P_opt} thus can be rewritten as 
\begin{equation}
\begin{split}
\bfP_{ij}^\star = \bfQ_{ij}^{++}\alpha_j\beta_i + \bfQ_{ij}^{+-}\alpha_j\beta^{-1}_i +\bfQ_{ij}^{-+}\alpha^{-1}_j\beta_i+ \bfQ_{ij}^{--}\alpha^{-1}_j\beta^{-1}_i,
\end{split}
\end{equation}
which satisfies the constraints in \eqref{eq:constraint2}, i.e., 
\begin{align*}
\sum_{i=1}^n &(\bfQ_{ij}^{++}\beta_i + \bfQ_{ij}^{+-}\beta^{-1}_i)\alpha_j \\ 
&\phantom{xxxxxxxxxxxxx}+(\bfQ_{ij}^{-+}\beta_i + \bfQ_{ij}^{--}\beta^{-1}_i)\alpha^{-1}_j = p_{j},\\ 
\sum_{j=1}^n &(\bfQ_{ij}^{++}\alpha_j+\bfQ_{ij}^{-+}\alpha^{-1}_j)\beta_i\\ 
&\phantom{xxxxxxxxxxxxx}+ (\bfQ_{ij}^{+-}\alpha_j + \bfQ_{ij}^{--}\alpha^{-1}_j)\beta^{-1}_i = q_{i}.   
\end{align*}
Additionally, to simplify the constraints, we define 
\begin{align*}
&a_j^{\{\bfQ++,\bfQ+-\}}(\boldsymbol{\beta}):= \sum_{i=1}^n \left(\bfQ_{ij}^{++}\beta_i + \bfQ_{ij}^{+-}\beta^{-1}_i\right),\\
&b_j^{\{\bfQ-+,\bfQ--\}}(\boldsymbol{\beta}):= \sum_{i=1}^n \left(\bfQ_{ij}^{-+}\beta_i+ \bfQ_{ij}^{--}\beta^{-1}_i\right),\\
&a_i^{\{\bfQ++,\bfQ-+\}}(\boldsymbol{\alpha}):= \sum_{j=1}^n \left(\bfQ_{ij}^{++}\alpha_j+\bfQ_{ij}^{-+}\alpha^{-1}_j\right),\\
&b_i^{\{\bfQ+-,\bfQ--\}}(\boldsymbol{\alpha}):= \sum_{j=1}^n \left(\bfQ_{ij}^{+-}\alpha_j + \bfQ_{ij}^{--}\alpha^{-1}_j\right).
\end{align*} 
We can then rewrite the constraints as 
\begin{subequations}\label{eq:update-constraint}
\begin{align}
p_j &= a_j^{\{\bfQ++,\bfQ+-\}}(\boldsymbol{\beta}) \alpha_j - b_j^{\{\bfQ-+,\bfQ--\}}(\boldsymbol{\beta}) \alpha^{-1}_j,\\
\label{eq:B_i}
q_i &= a_i^{\{\bfQ++,\bfQ-+\}}(\boldsymbol{\alpha}) \beta_i - b_i^{\{\bfQ+-,\bfQ--\}}(\boldsymbol{\alpha}) \beta^{-1}_i.
\end{align} 
\end{subequations}


We observe that $\alpha_j$ can be explicitly computed from the marginal constraint $p_j$, while keeping the vector $\boldsymbol{\beta}$ fixed. Similarly, $\beta_i$ can be computed from $q_i$. To see this, we can consider the following function 
\begin{align*}
    f(x)=ax+bx^{-1},
\end{align*}
where $a$ and $b$ are both positive. Thus, for a given value $c$, a solution to $f(x)=c$ can be readily obtained as the positive root of a quadratic equation, computed as
\begin{subequations}\label{eq:logquad}
\begin{align}\label{eq:logquad1}
    x= \left(\frac{-c+\sqrt{c^2 + 4ab}}{2b}\right)=:g(a,b,c).
\end{align}
For our purposes, the case where $b=0$ is also of interest, i.e.,
    \begin{align}\label{eq:logquad2}
    x= \left(\frac{a}{c}\right)=:g(a,0,c).
\end{align}
\end{subequations}
The proposed generalized Sinkhorn algorithm for two directional sign templates is summarized in Algorithm \ref{alg:sinkhorn-like}.

\begin{algorithm}[htb!]
\caption{Generalized Sinkhorn algorithm}\label{alg:sinkhorn-like}
\begin{algorithmic}[1]
\STATE Initialize $\boldsymbol{\beta}\in\mathbb R^n$, e.g., setting $\boldsymbol{\nu}=\textbf{1}$.\\[0.045in]
\STATE Compute $\bfQ^{++}$, $\bfQ^{+-}$, $\bfQ^{-+}$, and $\bfQ^{--}$.\\[0.045in]
\STATE 
For $j=1,2,\dots,n$, determine 
\begin{subequations}\label{eq:updates}
\begin{align}\label{eq:mu-update}
  \alpha_j=g\left(a_j^{\{\bfQ++,\bfQ+-\}}(\boldsymbol{\beta}),b_j^{\{\bfQ-+,\bfQ--\}}(\boldsymbol{\beta}),p_j\right).
\end{align}
\STATE 
For $i=1,2,\dots, n$, determine 
\begin{align}\label{eq:nu-update}
  \beta_i=g\left(a_i^{\{\bfQ++,\bfQ-+\}}(\boldsymbol{\alpha}),b_i^{\{\bfQ+-,\bfQ--\}}(\boldsymbol{\alpha}),q_i\right).
\end{align}
\end{subequations}
\STATE Repeat steps 2 and 3 until convergence.
\STATE Compute the optimizer $\bfP^\star$.
\end{algorithmic}
\end{algorithm}

\begin{proposition}[Convergence]\label{thm:convergence}
Algorithm \ref{alg:sinkhorn-like} converges to its optimal with a linear convergence rate under Assumption \ref{assum:1}.   
\end{proposition}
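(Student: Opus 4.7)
The plan is to recognize Algorithm \ref{alg:sinkhorn-like} as an exact block coordinate ascent procedure on the dual of Problem \ref{prob:2-dim}, and then invoke standard convergence theory for smooth concave maximization by coordinate updates, as in \cite{luo1992convergence}.

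First, I would derive the dual functional explicitly. Plugging the closed-form primal minimizer $\mathbf{P}^\star_{ij}=\mathbf{Q}_{ij}\exp(-\mu_j\mathbf{X}_{ij}-\nu_i\mathbf{Y}_{ij})$ from \eqref{eq:Pistar} into the Lagrangian \eqref{eq:Lagrangian}, the terms involving $P_{ij}$ collapse, leaving
\begin{equation*}
\phi(\boldsymbol{\mu},\boldsymbol{\nu})=-\!\!\sum_{ij\mid \mathbf{Q}_{ij}\neq 0}\!\! \mathbf{Q}_{ij}\exp(-\mu_j\mathbf{X}_{ij}-\nu_i\mathbf{Y}_{ij})-\mathbf{p}^\top\boldsymbol{\mu}-\mathbf{q}^\top\boldsymbol{\nu}.
\end{equation*}
Because each summand is a negative exponential of an affine function of $(\boldsymbol{\mu},\boldsymbol{\nu})$ and the remaining terms are linear, $\phi$ is a concave function. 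Under Assumption \ref{assum:1}, strong duality holds and maximizing $\phi$ recovers the primal optimum through \eqref{eq:Pistar}.

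Next, I would show that the updates \eqref{eq:mu-update}--\eqref{eq:nu-update} are exact block maximizers of $\phi$. Differentiating $\phi$ with respect to $\mu_j$, the stationarity condition is precisely $\sum_i\mathbf{X}_{ij}\mathbf{P}^\star_{ij}=p_j$, which after the substitution $\alpha_j=\exp(\mu_j)$, $\beta_i=\exp(\nu_i)$ becomes the one-variable equation \eqref{eq:update-constraint} for fixed $\boldsymbol{\beta}$. The coefficients $a_j^{\{Q++,Q+-\}}(\boldsymbol{\beta})$ and $b_j^{\{Q-+,Q--\}}(\boldsymbol{\beta})$ are nonnegative, so this equation has a unique positive root, given exactly by the formula $g$ in \eqref{eq:logquad1}--\eqref{eq:logquad2}. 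The same argument applies to the $\beta_i$ update. Hence each step of Algorithm \ref{alg:sinkhorn-like} is an exact coordinatewise maximization that strictly increases $\phi$ until a stationary point is reached.

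For asymptotic convergence, I would argue that Assumption \ref{assum:1} together with the positivity of $\mathbf{p}$ and $\mathbf{q}$ keeps the iterates in a bounded region: if any component $\alpha_j$ or $\beta_i$ diverged, the marginal constraint \eqref{eq:update-constraint} imposed by the corresponding update would be violated, contradicting the monotonic increase of $\phi$. The sign-indefinite mixing of $\mathbf{Q}^{\pm\pm}$ terms in $\mathbf{P}^\star$ also breaks the scale invariance $\alpha_j\mapsto c\alpha_j,\beta_i\mapsto c^{-1}\beta_i$ present in the classical Sinkhorn setting, so $\phi$ is strictly concave on the effective domain. Any limit point of the iterates therefore solves the dual, and by strict concavity the sequence converges to the unique maximizer.

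For the linear rate, I would localize the analysis to a compact sublevel set of $\phi$ containing all iterates. On this set $\phi$ is smooth with bounded Hessian and strongly concave (its Hessian is a weighted sum of rank-one negative definite terms with entries bounded below), so the error bound/regularity conditions required by the block coordinate ascent analysis of Luo and Tseng \cite{luo1992convergence} are satisfied; their theorem then yields the claimed linear rate on the dual iterates, which transfers via \eqref{eq:Pistar} to linear convergence of $\mathbf{P}^{(k)}\to\mathbf{P}^\star$. The main obstacle is the strong concavity step: I expect a careful argument is needed to lower-bound the Hessian eigenvalues uniformly over the sublevel set, since near the boundary of feasibility some dual components may be close to degenerate; this would be handled by using the feasibility of Assumption \ref{assum:1} to strictly separate iterates from the boundary.
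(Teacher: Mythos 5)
Your proposal follows essentially the same route as the paper's proof: substitute the closed-form optimizer \eqref{eq:Pistar} into the Lagrangian to obtain the concave dual, identify the updates \eqref{eq:updates} as exact coordinate-wise maximizations whose stationarity conditions are the marginal constraints, and invoke Luo--Tseng \cite{luo1992convergence} for the linear rate. If anything you are more careful than the paper, which simply asserts strict concavity and Slater's condition; your flagged concern about uniformly lower-bounding the dual Hessian on a compact sublevel set (and about possible residual scale invariance depending on the sign pattern) identifies a real gap that the published argument glosses over rather than one in your own reasoning.
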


\begin{proof}
The convergence of the algorithm can be understood through the strong duality of the problem. We first substitute the optimizer \eqref{eq:Pistar} into the Lagrangian \eqref{eq:Lagrangian}, obtaining
\begin{align}
    h(\boldsymbol{\mu},\boldsymbol{\nu}) = \sum_{i=1}^n\sum_{j=1}^n -\bfP^\star_{ij} - \mu_jp_j - \nu_iq_i.
\end{align}
The dual problem is therefore a maximization problem with a strictly concave objective defined as
\begin{align}\label{prob:dual}
    \argmax_{\boldsymbol{\mu},\boldsymbol{\nu}} \ \ h(\boldsymbol{\mu},\boldsymbol{\nu}),
\end{align}
where the Slater's condition is satisfied, ensuring that strong duality holds.

Given that the objective is strictly convex, we can obtain the optimizer $\mathbf{P}^\star$ using a coordinate ascent strategy, while ensuring strong duality holds. Specifically, the coordinate-wise optimization of the dual~\eqref{prob:dual}, i.e., 
\begin{align*}
    \mu_j = \argmax_{\mu_j}\ h(\boldsymbol{\mu},\boldsymbol{\nu}) \text{ and }
    \nu_i = \argmax_{\nu_i}\ h(\boldsymbol{\mu},\boldsymbol{\nu}),
\end{align*}
gives that
\begin{align*}
&\frac{\partial h(\mu_j,\nu_i)}{\partial \mu_j} = \sum_{i=1}^n \bfP_{ij}^\star  - p_{j}=0\\
&\frac{\partial h(\mu_j,\nu_i)}{\partial \nu_i} = \sum_{j=1}^n \bfP_{ij}^\star  - q_{i}=0
\end{align*}
at each step, leading to an update for $\mu_{j}$ and $\nu_i$ to satisfy \eqref{eq:update-constraint} and \eqref{eq:updates}. Therefore, the generalized Sinkhorn iteration in Algorithm~\ref{alg:sinkhorn-like} inherits the linear convergence rate of coordinate ascent~\cite[Theorem 2.1]{luo1992convergence}, as in the standard Sinkhorn. 
\end{proof}

Last but not least, the problem and obtained results (including Algorithm \ref{alg:sinkhorn-like} and Proposition \ref{thm:convergence}) can be extended to the multi-marginal setting for higher-order networks, with a predetermined $k$th-order $n$-dimensional prior $\mathcal{Q} \in \mathbb R^{n\times n\times \stackrel{k}{\cdots}\times n}_{\geq 0}$ and $k$ positive marginals 
$
\textbf{p}^{(1)}, \textbf{p}^{(2)},\dots, \textbf{p}^{(k)}\in\mathbb{R}^{n}_{>0}.
$ Moreover, every marginal constraint set for the posterior $\mathcal{P} \in \mathbb R^{n\times n\times \stackrel{k}{\cdots}\times n}_{\geq 0}$ is then defined as 
\begin{align*}
\mathscr{C}''_\ell:= \{ \mathcal{P}~|~ \sum_{i_1i_2\cdots i_k/i_\ell} \mathcal{X}^{(\ell)}_{i_1i_2\cdots i_k} \mathcal{P}_{i_1i_2\cdots i_k} = p^{(\ell)}_{i_\ell}\},
\end{align*}
where $\mathcal{X}^{(l)}\in \mathbb R^{n\times n\times \stackrel{k}{\cdots}\times n}$ are directional marginal-wise sign-indefinite adjacency tensors. Thus, all the admissible solution $\mathcal{P}$ has to be in the set
\begin{align*}
    \mathscr{C}'' = \mathscr{C}''_1 \cap \mathscr{C}''_2 \cap  \dots \cap \mathscr{C}''_k.
\end{align*}

\begin{problem}\label{prob:multi-dim}
The SBP seeks the most likely posterior $\mathcal{P} \in \mathbb R^{n\times n\times \stackrel{k}{\cdots}\times n}_{\geq 0}$ with respect to a given prior $\mathcal{Q} \in \mathbb R^{n\times n\times \stackrel{k}{\cdots}\times n}_{\geq 0}$, achieved by minimizing the relative entropy \eqref{eq:tensor_entropy}
over admissible solutions $\mathcal{P} \in \mathscr{C}''$.
\end{problem}

The problem is well-defined when it is feasible, and its solution can be obtained similarly to the two-dimensional case (Algorithm \ref{alg:sinkhorn-like}). Concerning convergence, the argument is analogous to our discussion in Proposition~\ref{thm:convergence}, as it relies on the gradient ascent nature of the problem~\cite{luo1992convergence}.

\section{General Higher-order Networks}\label{sec:general}
Every $k$-uniform higher-order network, where hyperedges have the same cardinality $k$, is known to possess a $k$th-order tensor representation \eqref{eq:tensor-adj}. However, representing general (or non-uniform) higher-order networks is not straightforward due to the potential variance in hyperedge cardinality. Nevertheless, a tensor representation can be achieved by converting the original non-uniform higher-order network to a uniform one through the addition of `virtual' nodes. These virtual nodes carry no nodal information but merely serve as placeholders to ensure every hyperedge has the same cardinality. Consequently, the converted uniform higher-order network features $k^{\max}$ cardinality for every hyperedge, with $k^{\max}$ representing the largest hyperedge cardinality in the original non-uniform higher-order network. A simple example of such conversion is illustrated in Figure~\ref{fig:uni-hypergraph}. Ultimately, the corresponding marginal distribution maintains the same number of nodes, with nodal information on every virtual node set to zero. Hence, this problem can be adapted to Problem \ref{prob:multi-dim}.
\begin{figure}[htb!]
\centering
\scalebox{0.8}{
\begin{tikzpicture}[scale=1]
\node[vertex,text=white,scale=0.85] (v1) {1};
\node[vertex,below of=v1,text=white,scale=0.85, node distance=30pt] (v2) {2};
\node[vertex,below of=v2,text=white,scale=0.85, node distance=30pt] (v3) {3};
\node[vertex,right of=v2,text=white,scale=0.85, node distance=60pt] (v4) {4};
\node[below of=v6,yshift=0cm]  (A) {\textbf{A}};

\node[vertex,right of=v1,text=white,scale=0.85, node distance=150pt] (v11) {1};
\node[vertex,below of=v11,text=white,scale=0.85, node distance=30pt] (v21) {2};
\node[vertex,below of=v21,text=white,scale=0.85, node distance=30pt] (v31) {3};
\node[vertex,right of=v21,text=white,scale=0.85, node distance=30pt] (v51) {5};
\node[vertex,right of=v51,text=white,scale=0.85, node distance=30pt] (v41) {4};
\node[below of=v61,yshift=0cm]  (B) {\textbf{B}};

\begin{pgfonlayer}{background}
\begin{scope}[transparency group,opacity=1]
\draw[edge,color=orange] (v1) -- (v2) -- (v3);
\draw[edge,color=green] (v1) -- (v4);
\draw[edge,color=red] (v3) -- (v4);

\draw[edge,color=orange] (v11) -- (v21)-- (v31);
\draw[edge,color=green] (v11) -- (v51) -- (v41);
\draw[edge,color=red] (v31) -- (v51) -- (v41);
\end{scope}
\end{pgfonlayer}
\end{tikzpicture}}
\caption{Uniformity conversion. (\textbf{A}) A non-uniform higher-order network with hyperedges $e_1=\{1,2,3\}$, $e_2=\{1,4\}$, and $e_3=\{3,4\}$ is converted to a $3$-uniform higher-order network by adding virtual node $5$. (\textbf{B}) The resulting 3-uniform higher-order network with hyperedges $e_1=\{1,2,3\}$, $e_2=\{1,4,5\}$, and $e_3=\{3,4,5\}$.}
\label{fig:uni-hypergraph}
\end{figure}
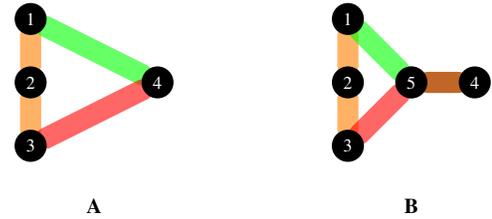

\section{Numerical Examples}\label{sec:num}
We conducted three numerical examples to illustrate our framework and the efficiency of the generalized Sinkhorn algorithm. The code for all experiments can be found at the following link: \url{https://github.com/dytroshut/marginal-wise-CDC}.
\begin{figure*}[htb!]
\centering
\begin{subfigure}[t]{0.5\textwidth}
\centering
\scalebox{0.9}{
\begin{tikzpicture}
[p/.style={circle,draw=black!45,fill=black!10,line width=0.5mm,inner sep=0pt,minimum size=6mm}]
\node (n1) at (0.6180,1.9021) [p] {$1$};
\node (n2) at (-0.6180,1.9021) [p] {$2$};
\node (n3) at (-1.6180,1.1756) [p] {$3$};			
\node (n4) at (-2,0) [p] {$4$};
\node (n5) at (-1.6180,-1.1756) [p] {$5$};
\node (n6) at (-.6180,-1.9021) [p] {$6$};
\node (n7) at (.6180,-1.9021) [p] {$7$};
\node (n8) at (1.6180,-1.1756) [p] {$8$};
\node (n9) at (2,0) [p] {$9$};
\node (n10) at (1.6180,1.1756) [p] {$10$};

\path[line width=.6mm][blue!90] (n1) edge [in=95,out=45,loop] node {} (n1);
\path[line width=.6mm][blue!90] (n5) edge [in=-180,out=-130,loop] node {} (n5);
\path[line width=.6mm][blue!90] (n7) edge [in=-40,out=-90,loop] node {} (n7);

\draw[line width=1mm,dotted][purple!90] (n1) to [bend left=15] node[midway,right](){} (n2);
\draw[line width=.6mm][blue!90] (n1) to [bend left=15] node[midway,right](){} (n3);
\draw[line width=.6mm][blue!90] (n1) to [bend left=15] node[midway,right](){} (n5);
\draw[line width=.6mm][blue!90] (n1) to [bend right=15] node[midway,right](){} (n7);
\draw[line width=.6mm][blue!90] (n1) to [bend right=15] node[midway,right](){} (n8);
\draw[line width=.6mm][blue!90] (n1) to [bend right=30] node[midway,right](){} (n9);

\draw[line width=.6mm][blue!90] (n2) to [bend left=20] node[midway,right](){} (n3);
\draw[line width=.6mm][blue!90] (n2) to [bend left=20] node[midway,right](){} (n7);
\draw[line width=.6mm][blue!90] (n2) to [bend right=30] node[midway,right](){} (n10);

\draw[line width=.6mm][blue!90] (n3) to [bend left=20] node[midway,right](){} (n5);
\draw[line width=.6mm][blue!90] (n3) to [bend left=15] node[midway,right](){} (n7);
\draw[line width=.6mm][blue!90] (n3) to [bend left=15] node[midway,right](){} (n8);

\draw[line width=.6mm][blue!90] (n4) to [bend left=20] node[midway,right](){} (n5);
\draw[line width=.6mm][blue!90] (n4) to [bend left=15] node[midway,right](){} (n8);

\draw[line width=.6mm][blue!90] (n5) to [bend left=15] node[midway,right](){} (n6);

\draw[line width=.6mm][blue!90] (n6) to [bend left=15] node[midway,right](){} (n7);
\draw[line width=.6mm][blue!90] (n6) to [bend left=15] node[midway,right](){} (n9);

\draw[line width=.6mm][blue!90] (n8) to [bend left=15] node[midway,right](){} (n9);
\draw[line width=.6mm][blue!90] (n8) to [bend left=15] node[midway,right](){} (n10);

\draw[line width=.6mm][blue!90] (n9) to [bend left=15] node[midway,right](){} (n10);
\end{tikzpicture}}
\end{subfigure}%
~ 
\begin{subfigure}[t]{0.5\textwidth}
\centering
\scalebox{0.9}{
\begin{tikzpicture}
[p/.style={circle,draw=black!45,fill=black!10,line width=0.5mm,inner sep=0pt,minimum size=6mm}]
\node (n1) at (0.6180,1.9021) [p] {$1$};
\node (n2) at (-0.6180,1.9021) [p] {$2$};
\node (n3) at (-1.6180,1.1756) [p] {$3$};			
\node (n4) at (-2,0) [p] {$4$};
\node (n5) at (-1.6180,-1.1756) [p] {$5$};
\node (n6) at (-.6180,-1.9021) [p] {$6$};
\node (n7) at (.6180,-1.9021) [p] {$7$};
\node (n8) at (1.6180,-1.1756) [p] {$8$};
\node (n9) at (2,0) [p] {$9$};
\node (n10) at (1.6180,1.1756) [p] {$10$};

\path[line width=.6mm][blue!90] (n1) edge [in=95,out=45,loop] node {} (n1);
\path[line width=.6mm][blue!90] (n5) edge [in=-180,out=-130,loop] node {} (n5);
\path[line width=.6mm][blue!90] (n7) edge [in=-40,out=-90,loop] node {} (n7);

\draw[line width=1mm,dotted][purple!90] (n1) to [bend left=15] node[midway,right](){} (n2);
\draw[line width=.6mm][blue!90] (n1) to [bend left=15] node[midway,right](){} (n3);
\draw[line width=.6mm][blue!90] (n1) to [bend left=15] node[midway,right](){} (n5);
\draw[line width=.6mm][blue!90] (n1) to [bend right=15] node[midway,right](){} (n7);
\draw[line width=.6mm][blue!90] (n1) to [bend right=15] node[midway,right](){} (n8);
\draw[line width=1mm,dotted][purple!90] (n1) to [bend right=30] node[midway,right](){} (n9);

\draw[line width=.6mm][blue!90] (n2) to [bend left=20] node[midway,right](){} (n3);
\draw[line width=.6mm][blue!90] (n2) to [bend left=20] node[midway,right](){} (n7);
\draw[line width=.6mm][blue!90] (n2) to [bend right=30] node[midway,right](){} (n10);

\draw[line width=.6mm][blue!90] (n3) to [bend left=20] node[midway,right](){} (n5);
\draw[line width=.6mm][blue!90] (n3) to [bend left=15] node[midway,right](){} (n7);
\draw[line width=.6mm][blue!90] (n3) to [bend left=15] node[midway,right](){} (n8);

\draw[line width=.6mm][blue!90] (n4) to [bend left=20] node[midway,right](){} (n5);
\draw[line width=.6mm][blue!90] (n4) to [bend left=15] node[midway,right](){} (n8);

\draw[line width=.6mm][blue!90] (n5) to [bend left=15] node[midway,right](){} (n6);

\draw[line width=.6mm][blue!90] (n6) to [bend left=15] node[midway,right](){} (n7);
\draw[line width=.6mm][blue!90] (n6) to [bend left=15] node[midway,right](){} (n9);

\draw[line width=.6mm][blue!90] (n8) to [bend left=15] node[midway,right](){} (n9);
\draw[line width=.6mm][blue!90] (n8) to [bend left=15] node[midway,right](){} (n10);

\draw[line width=1mm,dotted][purple!90] (n9) to [bend left=15] node[midway,right](){} (n10);
\end{tikzpicture}}
\end{subfigure}
\caption{The topology of a $10$-node network with sign templates $\textbf{X}$ (left) and $\textbf{Y}$ (right).}
\label{fig:topology}
\end{figure*}
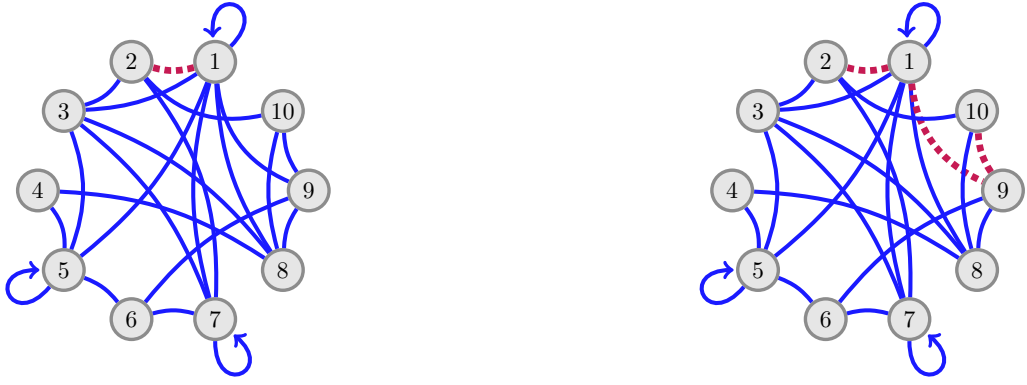

\subsection{Synthetic Example}
We first considered a simple synthetic example, where the sign-definite prior of the network is characterized by 
\begin{align*}
\bfQ = 
\begin{bmatrix}
    0 &1 &1 &1\\
    1 &0 &1 &1\\
    1 &1 &0 &1\\
    1 &1 &1 &0\\
\end{bmatrix}.
\end{align*}
The sign assigned on edges $\{1,2\}$ and $\{1,3\}$ is uncertain, with corresponding nodal information normalized as marginal distributions $\textbf{p} = [0.2, 0.3, 0.1, 0.4]$ and $\textbf{q} = [0.1, 0.1, 0.4, 0.4]$. The sign templates for the two templates are given by
\begin{align*}
&\textbf{X} = 
\begin{bmatrix}
    \phantom{-}0 &-1 &\phantom{-}1 &\phantom{-}1\\
    -1  &\phantom{-}0 &\phantom{-}1 &\phantom{-}1\\
    \phantom{-}1 &\phantom{-}1 &\phantom{-}0 &\phantom{-}1\\
    \phantom{-}1 &\phantom{-}1 &\phantom{-}1 &\phantom{-}0\\
\end{bmatrix}, \ 
\textbf{Y} = 
\begin{bmatrix}
    \phantom{-}0 &\phantom{-}1 &-1 &\phantom{-}1\\
    \phantom{-}1 &\phantom{-}0 &\phantom{-}1 &\phantom{-}1\\
    -1 &\phantom{-}1 &\phantom{-}0 &\phantom{-}1\\
    \phantom{-}1 &\phantom{-}1 &\phantom{-}1 &\phantom{-}0\\
\end{bmatrix}.
\end{align*}
According to Algorithm \ref{alg:sinkhorn-like}, the posterior $\bfP$ can be solved as
\begin{align*}
\bfP = 
\begin{bmatrix}
0    &0.0766    &0.1221    &0.1546\\
0.1269   &0    &0.1989    &0.2280\\
0.0815   &0.0011    &0    &0.0174\\
0.0546   &0.0223    &0.3231  &0    
\end{bmatrix}.
\end{align*}
Therefore, the posterior with sign-indefinite adjacencies thus has the form $\textbf{X} * \bfP$ and $\textbf{Y} * \bfP$.

\subsection{Ecological Network}
In ecological networks, interactions between species are often understood qualitatively, meaning that the signs of interactions (such as inhibition or promotion) can be determined with reasonable confidence, but quantifying the actual magnitudes may be challenging \cite{jeffries1974qualitative,logofet1982sign}. Under certain instances, even determining the signs of interactions can be obscure. Thus, it is necessary to consider multiple potential sign patterns when learning the magnitudes of these interactions. In this example, we considered an ecological network where ten species are represented by nodes, and their species-wise interactions are represented by edges. The abundance of each species is normalized and represented as marginal distributions given by
\begin{align*}
    \textbf{p} &= [0.1,0.05,0.05,0.15,0.2,0.05,0.03,0.07,0.25,0.05],\\
    \textbf{q} &= [0.05,0.1,0.05,0.2,0.07,0.15,0.05,0.25,0.03,0.05].
\end{align*}
The two marginal distributions originated from two potential sign templates are defined as
\begin{align*}
\small
\textbf{X} = 
\begin{bmatrix}
\phantom{-}1  &-1  &\phantom{-}1  &\phantom{-}0  &\phantom{-}1  &\phantom{-}0  &\phantom{-}1  &\phantom{-}0  &\phantom{-}1  &\phantom{-}1\\
-1  &\phantom{-}0  &\phantom{-}1  &\phantom{-}0  &\phantom{-}0  &\phantom{-}0  &\phantom{-}1  &\phantom{-}0  &\phantom{-}0  &\phantom{-}1\\
\phantom{-}1  &\phantom{-}1  &\phantom{-}0  &\phantom{-}0  &\phantom{-}1  &\phantom{-}0  &\phantom{-}1  &\phantom{-}1  &\phantom{-}0  &\phantom{-}0\\
\phantom{-}0  &\phantom{-}0  &\phantom{-}0  &\phantom{-}0  &\phantom{-}1  &\phantom{-}0  &\phantom{-}0  &\phantom{-}1  &\phantom{-}0  &\phantom{-}0\\
\phantom{-}1  &\phantom{-}0  &\phantom{-}1  &\phantom{-}1  &\phantom{-}1  &\phantom{-}1  &\phantom{-}0  &\phantom{-}0  &\phantom{-}0  &\phantom{-}0\\
\phantom{-}0  &\phantom{-}0  &\phantom{-}0  &\phantom{-}0  &\phantom{-}1  &\phantom{-}0  &\phantom{-}1  &\phantom{-}0  &\phantom{-}1  &\phantom{-}0\\
\phantom{-}1  &\phantom{-}1  &\phantom{-}1  &\phantom{-}0  &\phantom{-}0  &\phantom{-}1  &\phantom{-}1  &\phantom{-}0  &\phantom{-}0  &\phantom{-}0\\
\phantom{-}0  &\phantom{-}0  &\phantom{-}1  &\phantom{-}1  &\phantom{-}0  &\phantom{-}0  &\phantom{-}0  &\phantom{-}0  &\phantom{-}1  &\phantom{-}1\\
\phantom{-}1  &\phantom{-}0  &\phantom{-}0  &\phantom{-}0  &\phantom{-}0  &\phantom{-}1  &\phantom{-}0  &\phantom{-}1  &\phantom{-}0  &\phantom{-}1\\
\phantom{-}1  &\phantom{-}1  &\phantom{-}0  &\phantom{-}0  &\phantom{-}0  &\phantom{-}0  &\phantom{-}0  &\phantom{-}1  &\phantom{-}1  &\phantom{-}0
\end{bmatrix},
\end{align*}

\begin{align*}
\small
\textbf{Y} = 
\begin{bmatrix}
\phantom{-}1  &-1  &\phantom{-}1  &\phantom{-}0  &\phantom{-}1  &\phantom{-}0  &\phantom{-}1  &\phantom{-}0  &-1  &\phantom{-}1\\
-1  &\phantom{-}0  &\phantom{-}1  &\phantom{-}0  &\phantom{-}0  &\phantom{-}0  &\phantom{-}1  &\phantom{-}0  &\phantom{-}0  &\phantom{-}1\\
\phantom{-}1  &\phantom{-}1  &\phantom{-}0  &\phantom{-}0  &\phantom{-}1  &\phantom{-}0  &\phantom{-}1  &\phantom{-}1  &\phantom{-}0  &\phantom{-}0\\
\phantom{-}0  &\phantom{-}0  &\phantom{-}0  &\phantom{-}0  &\phantom{-}1  &\phantom{-}0  &\phantom{-}0  &\phantom{-}1  &\phantom{-}0  &\phantom{-}0\\
\phantom{-}1  &\phantom{-}0  &\phantom{-}1  &\phantom{-}1  &\phantom{-}1  &\phantom{-}1  &\phantom{-}0  &\phantom{-}0  &\phantom{-}0  &\phantom{-}0\\
\phantom{-}0  &\phantom{-}0  &\phantom{-}0  &\phantom{-}0  &\phantom{-}1  &\phantom{-}0  &\phantom{-}1  &\phantom{-}0  &\phantom{-}1  &\phantom{-}0\\
\phantom{-}1  &\phantom{-}1  &\phantom{-}1  &\phantom{-}0  &\phantom{-}0  &\phantom{-}1  &\phantom{-}1  &\phantom{-}0  &\phantom{-}0  &\phantom{-}0\\
\phantom{-}0  &\phantom{-}0  &\phantom{-}1  &\phantom{-}1  &\phantom{-}0  &\phantom{-}0  &\phantom{-}0  &\phantom{-}0  &\phantom{-}1  &\phantom{-}1\\
-1  &\phantom{-}0  &\phantom{-}0  &\phantom{-}0  &\phantom{-}0  &\phantom{-}1  &\phantom{-}0  &\phantom{-}1  &\phantom{-}0  &-1\\
\phantom{-}1  &\phantom{-}1  &\phantom{-}0  &\phantom{-}0  &\phantom{-}0  &\phantom{-}0  &\phantom{-}0  &\phantom{-}1  &-1  &\phantom{-}0
\end{bmatrix}.
\end{align*}
Specifically, the coefficient between Species 1 and 2 is known to be negative, while the interactions between Species 1 and 9, as well as between Species 9 and 10, are uncertain. The topologies of both of the 10-node signed ecological networks are depicted in Figure~\ref{fig:topology}.
The prior is set as $\textbf{Q} = |\textbf{X}| = |\textbf{Y}|$ for simplicity, without any initial guess of the edge weights. The posterior \eqref{eq:examplePi} obtained using Algorithm~\ref{alg:sinkhorn-like} represents the maximum likelihood estimation based on the observed information from the ecological networks.

\begin{figure*}[t]
\begin{equation}\label{eq:examplePi}
\bfP = 
\begin{bmatrix}
0.0108  &0.0990   &0.0817    & 0   & 0.0412   &0   &0.0167 &0   &0.0194   &0.0292\\
0.1497  &0   &0.1195   &  0   & 0   &0    &0.0291   &   0     & 0    &0.0511\\
0.0286  &0.0002   &0   &0    &0.0023    &0    &0.0009    &0.0180    &     0    &     0\\
0    &    0     &    0      &   0   & 0.0168    &     0    &     0    &0.1332    &     0   &    0\\
0.0010     &    0    &0.0063    &0.1439    &0.0038    &0.0450    &     0      &   0    & 0     &    0\\
0     &    0     &    0      &   0   & 0.0059    &     0    &0.0024     &    0    &0.0417    &     0\\
0.0006    &0.0002    &0.0035      &   0   &      0    &0.0249   & 0.0009     &    0    & 0     &    0\\
0     &    0    &0.0025    &0.0561   &      0     &    0    &     0     &    0    &0.0104    &0.0011\\
0.0850     &    0     &    0      &   0   &      0    &0.0801    &     0    &0.0536    & 0    &0.0314\\
0.0015    &0.0006     &    0      &   0   &      0    &     0    &     0    &0.0453    &0.0027    &     0 
\end{bmatrix}
\end{equation}
\hrulefill
\vspace*{4pt}
\end{figure*}

\subsection{Convergence}
To demonstrate the convergence rate of the proposed framework, we presented a multi-marginal case study involving tensors (Problem \ref{prob:multi-dim}), where both the prior and posterior are third-order tensors associated with a higher-order network. The marginal violations are defined as 
\begin{equation*}
    \log\Bigg(\Big\|\sum_{i_1i_2i_3/i_\ell} [\mathcal{X}^{(l)}*\mathcal{P}]_{i_1i_2i_3} - \textbf{p}^{(\ell)}\Big\|\Bigg)
\end{equation*}
for $\ell=1,2,3$. Both the convergence and its linear convergence rate in terms of marginal violations are depicted in Figure~\ref{fig:convergence1}.
\begin{figure}[t]
\centering
\includegraphics[width=9cm]{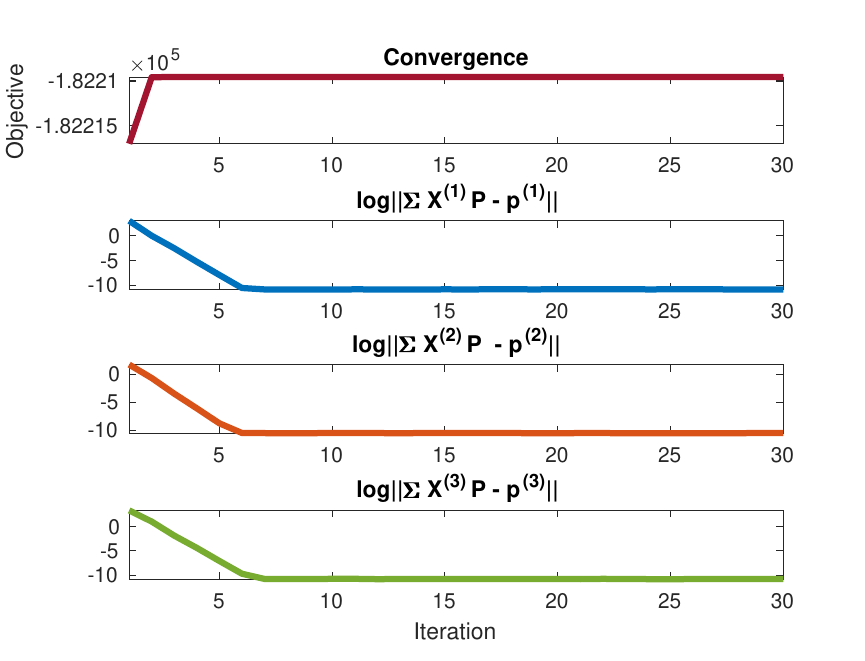}
\caption{The convergence of the proposed algorithm and the linear convergence rate in terms of marginal violations.}
\label{fig:convergence1}
\end{figure}

\section{Conclusion}\label{sec:conclude}
In this article, we introduced a generalized Sinkhorn algorithm with directional sign indicators, where each sign pattern serves as an initial guess along with the corresponding marginal, to quantitatively learn edge weights in a (higher-order) network. We also investigated the convergence and convergence rate of the proposed method. The current framework has demonstrated efficiency in the maximum likelihood estimation of (high-order) networks. Furthermore, given that a prior and a posterior are high-dimensional arrays, the problem may encounter the ``curse of dimensionality,'' as both memory and computation complexity increase exponentially with the size of the problem. To mitigate this challenge, it is worthwhile to apply tensor decomposition techniques.
\black


\bibliographystyle{plain}
\bibliography{references}

\begin{thebibliography}{10}

\bibitem{bastolla2009architecture}
Ugo Bastolla, Miguel~A Fortuna, Alberto Pascual-Garc{\'\i}a, Antonio Ferrera, Bartolo Luque, and Jordi Bascompte.
\newblock The architecture of mutualistic networks minimizes competition and increases biodiversity.
\newblock {\em Nature}, 458(7241):1018--1020, 2009.

\bibitem{basu2015network}
Sumanta Basu, Ali Shojaie, and George Michailidis.
\newblock Network granger causality with inherent grouping structure.
\newblock {\em The Journal of Machine Learning Research}, 16(1):417--453, 2015.

\bibitem{berge1984hypergraphs}
Claude Berge.
\newblock {\em Hypergraphs: combinatorics of finite sets}, volume~45.
\newblock Elsevier, 1984.

\bibitem{chen2023teasing}
Can Chen, Chen Liao, and Yang-Yu Liu.
\newblock Teasing out missing reactions in genome-scale metabolic networks through hypergraph learning.
\newblock {\em Nature Communications}, 14(1):2375, 2023.

\bibitem{chen2023survey}
Can Chen and Yang-Yu Liu.
\newblock A survey on hyperlink prediction.
\newblock {\em IEEE Transactions on Neural Networks and Learning Systems}, 2023.

\bibitem{chen2020tensor}
Can Chen and Indika Rajapakse.
\newblock Tensor entropy for uniform hypergraphs.
\newblock {\em IEEE Transactions on Network Science and Engineering}, 7(4):2889--2900, 2020.

\bibitem{chen2021controllability}
Can Chen, Amit Surana, Anthony~M Bloch, and Indika Rajapakse.
\newblock Controllability of hypergraphs.
\newblock {\em IEEE Transactions on Network Science and Engineering}, 8(2):1646--1657, 2021.

\bibitem{chen2015optimal}
Yongxin Chen, Tryphon~T Georgiou, and Michele Pavon.
\newblock Optimal steering of a linear stochastic system to a final probability distribution, {P}art {I}.
\newblock {\em IEEE Transactions on Automatic Control}, 61(5):1158--1169, 2015.

\bibitem{chen2016relation}
Yongxin Chen, Tryphon~T Georgiou, and Michele Pavon.
\newblock On the relation between optimal transport and {S}chr{\"o}dinger bridges: A stochastic control viewpoint.
\newblock {\em Journal of Optimization Theory and Applications}, 169:671--691, 2016.

\bibitem{chen2021optimal}
Yongxin Chen, Tryphon~T Georgiou, and Michele Pavon.
\newblock Optimal transport in systems and control.
\newblock {\em Annual Review of Control, Robotics, and Autonomous Systems}, 4:89--113, 2021.

\bibitem{chen2021stochastic}
Yongxin Chen, Tryphon~T Georgiou, and Michele Pavon.
\newblock Stochastic control liaisons: {R}ichard {S}inkhorn meets {G}aspard {M}onge on a {S}chr\"odinger bridge.
\newblock {\em {SIAM} Review}, 63(2):249--313, 2021.

\bibitem{chiuso2019system}
Alessandro Chiuso and Gianluigi Pillonetto.
\newblock System identification: A machine learning perspective.
\newblock {\em Annual Review of Control, Robotics, and Autonomous Systems}, 2:281--304, 2019.

\bibitem{dale2021quantitative}
Mark~RT Dale and Marie-Jos{\'e}e Fortin.
\newblock {\em Quantitative analysis of ecological networks}.
\newblock Cambridge University Press, 2021.

\bibitem{dittrich2020signal}
Thomas Dittrich and Gerald Matz.
\newblock Signal processing on signed graphs: Fundamentals and potentials.
\newblock {\em IEEE Signal Processing Magazine}, 37(6):86--98, 2020.

\bibitem{dong2023data}
Anqi Dong, Tryphon~T. Georgiou, and Allen Tannenbaum.
\newblock Data assimilation for sign-indefinite priors: A generalization of {S}inkhorn's algorithm.
\newblock {\em arXiv preprint: 2308.11791}, 2023.

\bibitem{dong2023negative}
Anqi Dong, Tryphon~T. Georgiou, and Allen Tannenbaum.
\newblock Promotion/inhibition effects in networks: A model with negative probabilities.
\newblock {\em arXiv preprint: 2307.07738}, 2023.

\bibitem{essid2019traversing}
Montacer Essid and Michele Pavon.
\newblock Traversing the schr{\"o}dinger bridge strait: Robert fortet’s marvelous proof redux.
\newblock {\em Journal of Optimization Theory and Applications}, 181(1):23--60, 2019.

\bibitem{garofalo2009evaluation}
Matteo Garofalo, Thierry Nieus, Paolo Massobrio, and Sergio Martinoia.
\newblock Evaluation of the performance of information theory-based methods and cross-correlation to estimate the functional connectivity in cortical networks.
\newblock {\em PloS one}, 4(8):e6482, 2009.

\bibitem{georgiou2006relative}
Tryphon~T Georgiou.
\newblock Relative entropy and the multivariable multidimensional moment problem.
\newblock {\em IEEE Transactions on Information Theory}, 52(3):1052--1066, 2006.

\bibitem{georgiou2015positive}
Tryphon~T. Georgiou and Michele Pavon.
\newblock Positive contraction mappings for classical and quantum {S}chr{\"o}dinger systems.
\newblock {\em Journal of Mathematical Physics}, 56(3), 2015.

\bibitem{jeffries1974qualitative}
Clark Jeffries.
\newblock Qualitative stability and digraphs in model ecosystems.
\newblock {\em Ecology}, 55(6):1415--1419, 1974.

\bibitem{logofet1982sign}
DO~Logofet and NB~Ulianov.
\newblock Sign stability in model ecosystems: a complete class of sign-stable patterns.
\newblock {\em Ecological modelling}, 16(2-4):173--189, 1982.

\bibitem{luo1992convergence}
Zhi-Quan Luo and Paul Tseng.
\newblock On the convergence of the coordinate descent method for convex differentiable minimization.
\newblock {\em Journal of Optimization Theory and Applications}, 72(1):7--35, 1992.

\bibitem{mesbahi2010graph}
Mehran Mesbahi.
\newblock {\em Graph theoretic methods in multiagent networks}.
\newblock Princeton University Press, 2010.

\bibitem{newman2018networks}
Mark Newman.
\newblock {\em Networks}.
\newblock Oxford university press, 2018.

\bibitem{peyre2019computational}
Gabriel Peyr{\'e}, Marco Cuturi, et~al.
\newblock Computational optimal transport: With applications to data science.
\newblock {\em Foundations and Trends{\textregistered} in Machine Learning}, 11(5-6):355--607, 2019.

\bibitem{stouffer2011compartmentalization}
Daniel~B Stouffer and Jordi Bascompte.
\newblock Compartmentalization increases food-web persistence.
\newblock {\em Proceedings of the National Academy of Sciences}, 108(9):3648--3652, 2011.

\bibitem{surana2022hypergraph}
Amit Surana, Can Chen, and Indika Rajapakse.
\newblock Hypergraph similarity measures.
\newblock {\em IEEE Transactions on Network Science and Engineering}, 10(2):658--674, 2022.

\bibitem{tannenbaum2004solution}
Emmanuel Tannenbaum and Eugene~I Shakhnovich.
\newblock Solution of the quasispecies model for an arbitrary gene network.
\newblock {\em Physical Review E}, 70(2):021903, 2004.

\bibitem{villaverde2014mider}
Alejandro~F Villaverde, John Ross, Federico Mor{\'a}n, and Julio~R Banga.
\newblock Mider: network inference with mutual information distance and entropy reduction.
\newblock {\em PloS one}, 9(5):e96732, 2014.

\bibitem{wolf2016advantages}
Michael~M Wolf, Alicia~M Klinvex, and Daniel~M Dunlavy.
\newblock Advantages to modeling relational data using hypergraphs versus graphs.
\newblock In {\em 2016 IEEE High Performance Extreme Computing Conference (HPEC)}, pages 1--7. IEEE, 2016.

\bibitem{yuan2017edge}
Weiwei Yuan, Kangya He, Donghai Guan, and Guangjie Han.
\newblock Edge-dual graph preserving sign prediction for signed social networks.
\newblock {\em IEEE Access}, 5:19383--19392, 2017.

\bibitem{yun2019graph}
Seongjun Yun, Minbyul Jeong, Raehyun Kim, Jaewoo Kang, and Hyunwoo~J Kim.
\newblock Graph transformer networks.
\newblock {\em Advances in neural information processing systems}, 32, 2019.

\bibitem{zaslavsky1982signed}
Thomas Zaslavsky.
\newblock Signed graphs.
\newblock {\em Discrete Applied Mathematics}, 4(1):47--74, 1982.

\end{thebibliography}

\end{document}